\def\@biblabel#1{#1.}\makeatother
\newtheorem{lemma}{Lemma}
\newtheorem{conjecture}{Conjecture}
\newtheorem{theorem}{Theorem}
\newtheorem{corollary}{Corollary}
\newtheorem{definition}{Definition}
\def\mydash{\CJKglue\raise0.2ex\hbox{---\kern-0.01em---}\CJKglue}
\def\NN{\mathbb{N}}
\def\ZZ{\mathbb{Z}}
\begin{document}
\title{Perfect numbers and Fibonacci primes }
\author{Tianxin Cai, Deyi Chen and Yong Zhang }
\date{}
\maketitle
\vspace{-3em}
\begin{abstract}
In this paper, we introduce the concept of $F$-perfect number, which is a positive integer $n$ such that $\sum_{d|n,d<n}d^2=3n$. We prove that all the $F$-perfect numbers are of the form $n=F_{2k-1}F_{2k+1}$, where both $F_{2k-1}$ and $F_{2k+1}$ are Fibonacci primes. Moreover, we obtain other interesting results and raise a new conjecture on perfect numbers.
\end{abstract}
\footnote[0]{2010 Mathematics Subject Classification. Primary 11B83; Secondary 11D09, 11D25.

\emph{Key words and phrases}. perfect number, Fibonacci prime, continued fraction, arithmetic-geometric mean inequality.

Project supported by the National Natural Science Foundation of China 11351002.}

\bigskip
\textbf{1. Introduction}
\smallskip

Many ancient cultures endowed certain integers with special religious and magical significance. One example is a perfect number $n$, which is equal to the sum of its proper positive divisors, i.e.,
\begin{equation*}
  \sum_{\substack{d|n\\d<n}}d=n.
\end{equation*}
 The first two perfect numbers are $6$ and $28$. It was suggested that God made the world in $6$ days because $6$ is a perfect number and the perfection of the universe was shown by moon's period of $28$ days. The next three perfect numbers are $496, 8128$, and $33550336$.

Euclid discovered an interesting connection between perfect numbers and Mersenne primes: If $M=2^p-1$ is a prime (Mersenne prime), then the $M$-th triangular number $T_{M}=\frac{1}{2}M(M+1)=2^{p-1}(2^p-1)$ is a perfect number. Almost two thousand years later, Euler showed that all the even perfect numbers are of Euclid's form. So far we only know $48$ Mersenne primes, hence we only know $48$ even perfect numbers. As for odd perfect numbers, it is unknown whether they exist. Also, no proof is known whether there are infinitely many perfect numbers. See \cite{1}: B1 for more unsolved problems on perfect numbers.

Meanwhile, some great mathematicians tried to find the generalization of perfect numbers, they consider
\begin{equation*}
  \sum_{\substack{d|n\\d<n}}d=kn,
\end{equation*}
where $k$ is a fixed positive integer, it's called $k$-perfect number. When $k=1$, it is ordinary perfect number. For $k=2$, Recorder found the first one $120$ in 1557, in that year he invented $``="$ as equal sign. Fermat found the second one $672$ in 1637, in that year he raised Fermat's Last Theorem. Andr$\acute{e}$ Jumeau found the third one $523776$. The fifth one $1476304896$ was found by Descartes in 1638, it was one year before Mersenne found the fourth one $459818240$. Descartes also found six others of class $3$: $30240, 32760, 23569920, 142990848, 66433720320$ and one of class $4$: $14182439040$. cf.\cite{4}. However, no one found any criterion for class $k>1$ as Euler did for class $1$. In modern time, quite a few number theorists including Lehmer and Carmichael studied this problem, they found thousands of multiply perfect numbers.

Let $\NN$ be the set of positive integers. In this paper, we first consider a generalization of perfect numbers:
\begin{equation}
  \sigma_{2}(n)-n^2=\sum_{\substack{d|n\\d<n}}{d^2}=bn,
\end{equation}
where $b \in \NN$.  We have
\begin{theorem}
  For any given positive integer $b\neq3$, the equation $(1)$ has only finitely many solutions. In particular, $(1)$ has no solution when $b=1,2$.
\end{theorem}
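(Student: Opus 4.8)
The plan is to split the argument according to $\Omega(n)$, the number of prime factors of $n$ counted with multiplicity, and to settle the one genuinely arithmetic case, $\Omega(n)=2$, by Vieta jumping. First I would clear the trivial cases: if $\Omega(n)\le 1$ (that is, $n=1$ or $n$ prime) then $\sum_{d\mid n,\,d<n}d^{2}$ equals $0$ or $1$, which cannot equal $bn$; so every solution of $(1)$ is composite. If $n$ is composite with least prime factor $p$, then $n/p$ is its largest proper divisor, so
\begin{equation*}
  \Bigl(\tfrac{n}{p}\Bigr)^{2}\le\sum_{\substack{d\mid n\\ d<n}}d^{2}=bn ,
\end{equation*}
which gives the basic inequality $n\le bp^{2}$.

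I would then handle $\Omega(n)\ge 3$: since every prime factor of $n$ is at least $p$ we have $n\ge p^{3}$, and together with $n\le bp^{2}$ this forces $p\le b$, hence $n\le b^{3}$. Thus for any fixed $b$ there are only finitely many solutions with $\Omega(n)\ge 3$. There remains $\Omega(n)=2$. The case $n=p^{2}$ is impossible, since $(1)$ then reads $(b-1)p^{2}=1$. So it remains to exclude, for $b\ne 3$, the case $n=pq$ with $p<q$ primes, where $(1)$ becomes
\begin{equation*}
  1+p^{2}+q^{2}=bpq .
\end{equation*}
By the arithmetic--geometric mean inequality $p^{2}+q^{2}\ge 2pq$, so the left side exceeds $2pq$ and hence $b\ge 3$; in particular this case cannot arise when $b=1$ or $b=2$.

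For the crucial step I would study $x^{2}+y^{2}+1=bxy$ over the positive integers by descent. Assume it has a solution, and pick one with $x+y$ minimal, say $x\le y$. If $x=y$ then $(b-2)x^{2}=1$, forcing $x=1$ and $b=3$. If $x<y$, then viewing the equation as a quadratic in $y$ shows that $y^{*}:=bx-y=(x^{2}+1)/y$ is again a positive integer with $x^{2}+(y^{*})^{2}+1=bxy^{*}$; since $y>x\ge 1$ we get $y^{*}=(x^{2}+1)/y<(x^{2}+1)/x\le x+1$, whence $y^{*}\le x<y$, so $(x,y^{*})$ is a solution with strictly smaller sum --- contradicting minimality. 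Hence the minimal solution has $x=y$, and therefore $x^{2}+y^{2}+1=bxy$ has no positive integer solution when $b\ne 3$.

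Combining everything: for $b\ne 3$ no solution of $(1)$ has $\Omega(n)\le 2$, so every solution has $\Omega(n)\ge 3$ and hence $n\le b^{3}$, leaving only finitely many. For $b=1$ the bound $n\le 1$ admits no such $n$, and for $b=2$ the sole candidate is $n=8$, which fails since $1+4+16=21\ne 16$; hence $(1)$ is unsolvable for $b=1,2$. I expect the main obstacle to be the descent step, where one must check carefully that $bx-y$ is a \emph{positive} integer and that $bx-y\le x$, so that the jumped pair is a genuine solution of strictly smaller size; the remaining estimates are routine. (The continued-fraction aspect surfaces right after: for $b=3$, iterating the jump upward from $(1,1)$ yields exactly the pairs $(F_{2k-1},F_{2k+1})$, the starting point for the main theorem.)
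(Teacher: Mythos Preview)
Your proof is correct, and it takes a genuinely different and more elementary route than the paper's.

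The paper splits according to the number $k=\omega(n)$ of \emph{distinct} prime factors. For $k\ge 3$ it applies the arithmetic--geometric mean inequality to the terms $n^{2}/p_{i}^{2}$ to get $b\ge 3n^{1/3}$; for $k=2$ it must still treat separately the cases $\alpha_{1}+\alpha_{2}\ge 3$ before reducing to the squarefree case $n=p_{1}p_{2}$. You instead split by $\Omega(n)$ and use the single estimate $(n/p)^{2}\le bn$, which absorbs all the higher-exponent subcases into $\Omega(n)\ge 3$ in one stroke; this is cleaner and avoids the AM--GM computation. The more substantial divergence is in handling $1+x^{2}+y^{2}=bxy$ for $b\ne 3$: the paper (its Lemma~4) passes to the Pell-type equation $z^{2}-(b^{2}-4)y^{2}=-4$, invokes a solvability criterion via the parity of the period of the continued fraction of $\sqrt{b^{2}-4}$, and then cites an explicit expansion $\sqrt{b^{2}-4}=[\,b-1;\overline{1,\tfrac{b-3}{2},2,\tfrac{b-3}{2},1,2b-2}\,]$ of period $6$. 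Your Vieta-jumping argument replaces all of this with a three-line descent, so it is self-contained and needs no external references. The trade-off is that the paper's continued-fraction setup connects more directly to the Fibonacci structure exploited in the sequel, whereas your approach recovers that only by running the jump upward from $(1,1)$, as you note in your final remark.
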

\begin{theorem}
For $b=3$, all the solutions of $(1)$ are $n=F_{2k-1}F_{2k+1}~(k\geq1)$, where  both $F_{2k-1}$ and $F_{2k+1}$ are  Fibonacci primes.
\end{theorem}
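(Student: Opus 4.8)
The plan is to prove the biconditional behind the statement: $n$ solves $(1)$ with $b=3$ if and only if $n = F_{2k-1}F_{2k+1}$ with both factors prime. The easy direction is immediate: if $p := F_{2k-1}$ and $q := F_{2k+1}$ are prime, then $n = pq$ has proper divisors exactly $1,p,q$, so $\sum_{d|n,\,d<n} d^2 = 1 + F_{2k-1}^2 + F_{2k+1}^2$, and I would verify the Fibonacci identity $F_{2k-1}^2 + F_{2k+1}^2 + 1 = 3F_{2k-1}F_{2k+1}$ (by induction, or by combining Cassini's identity $F_{2k-1}F_{2k+1} - F_{2k}^2 = 1$ with $F_{2k+1} - F_{2k-1} = F_{2k}$), which is precisely the equation $\sum = 3n$.

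For the converse I would first reduce an arbitrary solution $n$ to the shape $n = pq$. If $p$ denotes the least prime factor of $n$, then the largest proper divisor of $n$ is $n/p$, whence $3n = \sum_{d|n,\,d<n} d^2 \geq (n/p)^2$ and so $n \leq 3p^2$; writing $n = pm$ with every prime factor of $m$ at least $p$, this says $m \leq 3p$. A prime $n = p$ is impossible ($1 = 3p$). When $p \geq 5$ we have $m \leq 3p < p^2$, so $m$ cannot carry two prime factors counted with multiplicity; hence $m = 1$, or $m = p$ (forcing the impossible $1 + p^2 = 3p^2$), or $m = q$ is a single prime $> p$, i.e. $n = pq$. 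The two residual cases $p = 2$ and $p = 3$ reduce to checking $n \leq 12$ and $n \leq 27$ directly, which I expect to yield only $n = 10 = F_3F_5$. Thus every solution has the form $n = pq$ with $p < q$ primes satisfying $1 + p^2 + q^2 = 3pq$.

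It then remains to solve $q^2 - 3pq + (p^2+1) = 0$ in primes $p < q$. Integrality of $q = \tfrac12\big(3p + \sqrt{5p^2-4}\big)$ forces $5p^2 - 4$ to be a square $t^2$, i.e. the Pell-type equation $t^2 - 5p^2 = -4$; this is where the continued fraction of $\sqrt5$ enters, giving that the positive solutions are exactly $(t,p) = (L_{2k-1}, F_{2k-1})$, $k \geq 1$ (consistent with $L_m^2 - 5F_m^2 = 4(-1)^m$). Substituting back and simplifying via $L_m = F_{m-1} + F_{m+1}$ yields $q = \tfrac12(3F_{2k-1} + L_{2k-1}) = F_{2k+1}$, so $\{p,q\} = \{F_{2k-1}, F_{2k+1}\}$ with both prime. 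For $k=1$ this would need $F_1 = 1$ to be prime, so genuine solutions begin at $k = 2$, namely $n = 10, 65, \dots$. Combined with the easy direction, this proves the theorem.

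The step I expect to be the main obstacle is the complete and rigorous enumeration of the positive solutions of $t^2 - 5p^2 = -4$: one must be sure that the continued-fraction / recursive description misses no solution and then translate it cleanly into ``consecutive odd-index Fibonacci numbers.'' By comparison, the divisor inequality and the finitely many small-prime checks are routine.
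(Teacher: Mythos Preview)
Your argument is correct, and your route to the Diophantine equation $1+p^2+q^2=3pq$ is essentially the same as the paper's Lemma~3: the discriminant condition $5p^2-4=\square$ is exactly Gessel's criterion for $p$ to be an odd-index Fibonacci number, which the paper cites, while you phrase it as the Pell equation $t^2-5p^2=-4$ with solutions $(L_{2k-1},F_{2k-1})$. These are the same fact in different dress, and the identity $L_m^2-5F_m^2=4(-1)^m$ makes the enumeration you flagged as the ``main obstacle'' entirely standard.

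Where you genuinely diverge from the paper is in the reduction to $n=pq$. The paper argues structurally via the AM--GM inequality: for $k\ge 3$ prime factors it obtains $3\ge 3n^{1/3}$ (from the proof of Theorem~1), forcing $n=1$; for $k=2$ with $\alpha_1+\alpha_2\ge 3$ it applies AM--GM to the $\alpha_1+\alpha_2$ terms $p_1^{2i}p_2^{2\alpha_2}$ and $p_2^{2j}p_1^{2\alpha_1}$ to get a strict inequality $3n>3n$. Your approach instead uses the single-divisor bound $3n\ge (n/p)^2$, giving $n\le 3p^2$, and then disposes of $p=2,3$ by a finite check. Your version is shorter and more elementary, at the cost of a small case analysis; the paper's AM--GM argument is uniform in $p$ and needs no numerics, at the cost of a slightly delicate exponent computation. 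Both are valid and each has its appeal.

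One cosmetic point: in the paper's indexing the statement allows $k\ge 1$, but the clause ``both $F_{2k-1}$ and $F_{2k+1}$ are Fibonacci primes'' already excludes $k=1$ since $F_1=1$, so your remark that genuine solutions begin at $k=2$ is consistent with the theorem as stated.
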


The original perfect numbers may be called $M$-perfect numbers because of Euler's result. In view of Theorem 1 and Theorem 2, we introduce the following definition.
\begin{definition}
  If a positive integer $n$ satisfies
  \begin{equation*}
    \sigma_{2}(n)-n^2=3n,
  \end{equation*}
  then we call $n$ an $F$-perfect number.
\end{definition}
As of November 2009, the largest known Fibonacci prime is $F_{81839}$ and the largest known probable Fibonacci prime is $F_{1968721}$ (it has $411,439$ digits). By the list of Fibonacci primes, we deduce that $n=F_{3}F_{5}=10, F_{5}F_{7}=65,F_{11}F_{13}=20737,$ $F_{431}F_{433}$ (180 digits), $F_{569}F_{571}$ (238 digits) are the only 5 $F$-perfect numbers we know. Note that $10=F_{3}F_{5}$ is the unique even $F$-perfect number. The next probable odd $F$-perfect number has at least $822,878$ digits. However, since the 36th Mersenne prime has $895,932$ digits and the 48th one has $17,425,170$ digits, if we use other computer system similar to GIMPS, which is used to find Mersenne prime, we may find more $F$-perfect numbers. Anyway, neither we know that there is any more odd $F$-perfect number nor if there are infinitely many $F$-perfect numbers.

Next, we consider a further generalization of perfect numbers:
\begin{equation}
  \sigma_{a}(n)-n^a=\sum_{\substack{d|n\\d<n}}{d^a}=bn,
\end{equation}
where integers $a\geq3$ and $b\geq1$. We obtain the following results.
\begin{theorem}
  For any given positive integers $a\geq3$ and $b\geq1$, the equation $(2)$ has only finitely many solutions.
\end{theorem}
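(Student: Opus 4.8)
The plan is to bound every solution $n$ of $(2)$ explicitly in terms of $a$ and $b$, using only the contribution of the \emph{single largest} proper divisor of $n$.

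First I would dispose of the degenerate cases. If $n=1$ the left-hand side of $(2)$ is the empty sum $0$, whereas $bn\ge 1$; if $n$ is prime, the only proper divisor is $1$, so the left-hand side equals $1<2\le bn$. Hence any solution $n$ is composite, and in particular its least prime factor $p$ satisfies $p\le\sqrt n$ (write $n=mp$ with $1<m\le n/p$, so $p\le\sqrt{mp}\le\sqrt n$).

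Next comes the key estimate. With $m=n/p$ the largest proper divisor of $n$, the term $m^a$ is one of the non-negative summands in $\sum_{d\mid n,\,d<n}d^a$, so $(2)$ forces $m^a\le bn$. Writing $n=mp$ gives $n^a=m^ap^a\le bn\cdot p^a\le bn\cdot n^{a/2}$, hence
\begin{equation*}
  n^{\,(a-2)/2}=n^{\,a-1-a/2}\le b .
\end{equation*}
Since $a\ge 3$, the exponent $(a-2)/2$ is at least $\tfrac12>0$, so $n\le b^{2/(a-2)}$, and therefore $(2)$ has only finitely many solutions.

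I expect essentially no obstacle: once one notices that for $a\ge 3$ a single proper divisor already over-constrains $n$, the argument is elementary. The only point worth flagging is exactly where the hypothesis $a\ge 3$ is used — for $a=2$ the displayed inequality collapses to the vacuous $1\le b$, which is precisely why the case $a=2$ (Theorems 1 and 2) requires the far more delicate continued-fraction and arithmetic–geometric-mean analysis rather than this one-line bound. If desired, the constant can be sharpened, or all solutions for small $a,b$ enumerated directly from $n\le b^{2/(a-2)}$, but neither refinement is needed for the finiteness statement.
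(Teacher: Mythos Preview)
Your argument is correct, and it takes a genuinely different (and more elementary) route than the paper. The paper bounds $bn$ from below by the $k$ terms $(n/p_i)^a$, one for each distinct prime factor $p_i$ of $n$, and then applies the arithmetic--geometric mean inequality to obtain $b\ge k\prod_i p_i^{(a-1)\alpha_i-a/k}\ge 2n^{1/2}$; this is a direct adaptation of the proof of Theorem~1. You instead keep only the single largest proper divisor $n/p$ and combine $ (n/p)^a\le bn$ with the elementary fact $p\le\sqrt{n}$ for composite $n$, obtaining $n\le b^{2/(a-2)}$. Your approach avoids AM--GM entirely, yields a bound that sharpens as $a$ grows, and isolates cleanly where the hypothesis $a\ge 3$ enters (the exponent $(a-2)/2$ must be positive). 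The paper's approach has the advantage of reusing the machinery already set up for Theorem~1, making the presentation more uniform. One cosmetic point: your parenthetical justification of $p\le\sqrt{n}$ is a little tangled; it is cleaner to say that a composite $n$ has a nontrivial factorization $n=ab$ with $1<a\le b$, whence $a\le\sqrt{n}$ and $p\le a$.
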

\begin{corollary}
  For any given positive integer $a\geq2$, there are infinitely many positive integers $b$ such that $(2)$ has integer solutions.
\end{corollary}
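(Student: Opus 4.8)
The plan is to reduce the statement to producing, for each fixed $a\ge 2$, infinitely many positive integers $n$ with $n\mid\sigma_a(n)$. If $n\mid\sigma_a(n)$, then since $n\mid n^a$ we get $n\mid\sigma_a(n)-n^a$, so $b:=\sigma_a(n)/n-n^{a-1}$ is a positive integer and $(n,b)$ solves $(2)$. I would moreover keep all these $n$ even; then $n/2$ is a proper divisor of $n$, so $bn=\sum_{d\mid n,\,d<n}d^a\ge (n/2)^a$, i.e. $b\ge n^{a-1}/2^a$. Since $a\ge 2$, the right side is unbounded as $n$ runs through the family, so infinitely many values of $b$ occur, as required. (Equivalently, once infinitely many solutions $n$ are in hand, Theorem 1 and Theorem 3 — each of which bounds, for a fixed $b$, the number of solutions — force infinitely many $b$; the only value needing separate care is $b=3$ with $a=2$, which is not the sole $b$ produced by the family.)

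To construct the family, look for squarefree $n=\prod_{p\in P}p$ with $P$ a finite set of primes containing $2$. Then $\sigma_a(n)=\prod_{p\in P}(1+p^a)$, and because $n$ is squarefree, $n\mid\sigma_a(n)$ is equivalent to the combinatorial condition: for every $q\in P$ there is some $p\in P$, $p\ne q$, with $q\mid p^a+1$. Such a $P$ can be grown greedily. Start with $P_0=\{2\}\cup\{\text{primes dividing }2^a+1\}$: every odd $q\in P_0$ divides $2^a+1$, and $2\mid q^a+1$ for each such $q$, so the condition holds, and $n$ will be even. At each later stage, adjoin to the current $P$ any prime $r\notin P$ dividing $p^a+1$ for some $p\in P$; then $r\ne p$ automatically, $r$ receives the required witness $p$, and no old member of $P$ loses a witness, so the enlarged set is again admissible. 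This yields strictly increasing even $n$ with $n\mid\sigma_a(n)$, which together with the first paragraph finishes the proof.

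The one substantive point — and the step I expect to be the main obstacle — is that this growth never halts: one must show that no finite set of primes containing $2$ is closed under ``adjoin the prime divisors of $p^a+1$'', equivalently that iterating $p\mapsto\{\text{prime divisors of }p^a+1\}$ starting from $2$ reaches infinitely many primes. The easy part is that $p^a+1$ is never a power of $2$ for $p$ odd and $a\ge 2$ (for $a$ even, $p^a+1\equiv 2\pmod 4$; for $a$ odd, $p^a+1=(p+1)(p^{a-1}-p^{a-2}+\cdots+1)$ with an odd second factor exceeding $1$; alternatively cite Mihailescu's theorem), so $p^a+1$ always contributes an odd prime factor $\ne p$. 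The genuine difficulty is guaranteeing that such a factor can be taken \emph{new}; I would handle this with Zsygmondy's theorem — for $a\ge 2$ and $p\ge 3$, $p^{2a}-1$ has a primitive prime divisor $r$, which then divides $p^a+1$ and has multiplicative order exactly $2a$ modulo itself, so $2a\mid r-1$ — and then bootstrap, using this order constraint together with the freedom in the earlier choices, to show the reachable set cannot be finite. Everything else in the argument is routine.
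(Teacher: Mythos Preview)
Your reduction is exactly the paper's: both arguments come down to producing infinitely many even $n$ with $n\mid\sigma_a(n)$, and then concluding that infinitely many values of $b$ arise. The paper does the second step by invoking Theorems~1--3 (together with the remark that for $a=2$, $b=3$ the unique even solution is $n=10$); your inequality $b\ge n^{a-1}/2^a$ is a clean alternative that bypasses those theorems entirely.

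Where you diverge is that the paper takes the first step as a black box --- it is Lemma~5, quoted from Luca and Ferdinands --- whereas you try to reprove it via the greedy squarefree construction. Your setup is correct: the admissibility condition on $P$ is right, $P_0$ is admissible, and enlarging an admissible $P$ by any prime $r\notin P$ dividing some $p^a+1$ with $p\in P$ keeps it admissible. But the step you yourself flag as the obstacle is not actually resolved. Zsygmondy gives, for each odd $p\in P$, a prime $r\mid p^a+1$ with $\mathrm{ord}_r(p)=2a$, hence $r\equiv 1\pmod{2a}$; nothing in that conclusion prevents $r$ from already lying in $P$. The phrase ``bootstrap, using this order constraint together with the freedom in the earlier choices'' is not an argument, and I do not see how to turn it into one: the congruence $r\equiv 1\pmod{2a}$ pins $r$ to a fixed residue class, but a finite $P$ can certainly contain many primes in that class, and iterating the Zsygmondy map on a putative finite closed set produces a cycle without any evident contradiction. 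So as it stands your proposal reproduces the paper's skeleton but leaves unproved precisely the lemma the paper imports from~\cite{3}.
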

\begin{theorem}
  If $n=pq$, where $p<q$ are primes and $n|\sigma_{3}(n)$, then $n=6$; if $n=2^\alpha p$ $(\alpha\geq1)$, $p$ is an odd prime and $n|\sigma_{3}(n)$, then $n$ is an even perfect number. The converse is also true except for $28$.
\end{theorem}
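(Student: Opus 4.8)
The plan is to use multiplicativity of $\sigma_3$, via $\sigma_3(p^a)=(p^{3(a+1)}-1)/(p^3-1)$, to convert $n\mid\sigma_3(n)$ into congruence conditions on the prime factors of $n$, and to treat the two assertions separately.

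For $n=pq$ with $p<q$ primes one has $\sigma_3(n)=(1+p^3)(1+q^3)$, and since $\gcd(p,1+p^3)=\gcd(q,1+q^3)=1$ the hypothesis is equivalent to $p\mid 1+q^3$ and $q\mid 1+p^3$. Factor $1+p^3=(1+p)(p^2-p+1)$: if $q\mid 1+p$ then $q=p+1$, so $\{p,q\}=\{2,3\}$ and $n=6$; otherwise $q\mid p^2-p+1$, and since $p=2$ would give $q\mid 9$, hence $q=3$ and again $n=6$, we may assume $p\ge 3$. Now factor $1+q^3=(1+q)(q^2-q+1)$. If $p\mid q^2-q+1$, then $(p,q)$ is a pair of integers $2\le a<b$ with $b\mid a^2-a+1$ and $a\mid b^2-b+1$; I would exclude this by descent --- from $a^2-a+1=bc$ one gets $1\le c<a$ with $c\mid a^2-a+1$, and inverting $b\equiv c^{-1}\pmod a$ in $a\mid b^2-b+1$ gives $a\mid c^2-c+1$, so $(c,a)$ is a strictly smaller pair of the same kind, impossible unless $c=1$, which forces $a=1$. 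If instead $p\mid 1+q$, then $q\equiv-1\pmod p$, so $q\ge 2p-1$, while $q\mid p^2-p+1$ writes $p^2-p+1=qm$ with $1\le m<p$; reducing modulo $p$ gives $-m\equiv qm\equiv 1$, hence $m=p-1$ and then $(q-p)(p-1)=1$, forcing $p=2$, a contradiction. So $n=6$, and conversely $6=2\cdot3$ with $6\mid\sigma_3(6)=252$.

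For $n=2^\alpha p$ with $p$ an odd prime, write $\sigma_3(n)=N_\alpha(1+p^3)$ with $N_\alpha:=1+2^3+\dots+2^{3\alpha}=(8^{\alpha+1}-1)/7$, which is odd. Since $p^2-p+1$ is also odd, $n\mid\sigma_3(n)$ amounts to $2^\alpha\mid p+1$ together with $p\mid N_\alpha$. The value $p=7$ cannot occur, since $7\mid N_\alpha\iff 7\mid\alpha+1$ while $2^\alpha\mid 8$ forces $\alpha\le 3$; so assume $p\ne 7$ and set $p+1=2^\alpha k$, where $\gcd(k,p)=1$ and $p\ge 2k-1$. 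From $p\mid N_\alpha$, hence $p\mid 8^{\alpha+1}-1=(2^{\alpha+1})^3-1$, and $2^{\alpha+1}\equiv 2k^{-1}\pmod p$, we obtain $k^3\equiv 8\pmod p$, i.e. $p\mid(k-2)(k^2+2k+4)$. If $k=2$ then $p=2^{\alpha+1}-1$, which (being prime) is a Mersenne prime, so $n=2^\alpha(2^{\alpha+1}-1)$ is an even perfect number; if $k\ne 2$ then $k<p$ gives $p\nmid k-2$, so $p\mid k^2+2k+4$. This last possibility is the crux: writing $k^2+2k+4=ps$ and reducing modulo $k$ (with $p\equiv-1\pmod k$) gives $s\equiv-4\pmod k$, while $p\ge 2k-1$ forces $s=(k^2+2k+4)/p<k/2+3$; comparing these, for $k\ge 5$ one needs $s=k-4$, hence $k-4\mid k^2+2k+4$, i.e. $k-4\mid 28$ and $k<14$, so altogether $k\in\{1,3,4,5,6,8,11\}$ --- a list one checks case by case to confirm $p=2^\alpha k-1$ is never a prime dividing $k^2+2k+4$. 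Hence $n$ must be an even perfect number. Conversely, for $n=2^{\beta-1}(2^\beta-1)$ a direct computation gives $\sigma_3(n)/n=\tfrac{2}{7}(4^\beta+2^\beta+1)(4^\beta-3\cdot2^\beta+3)$, and reducing modulo $7$ (where $2^\beta$ runs through $2,4,1$) shows this is an integer precisely when $3\nmid\beta$; since a Mersenne prime $2^\beta-1$ forces $\beta$ prime, the sole exception is $\beta=3$, i.e. $n=28$, for which indeed $28\nmid\sigma_3(28)=25112$.

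The main obstacle is the final step above --- ruling out $p\mid k^2+2k+4$ --- since it is the one point where no single congruence suffices and one must combine the size estimate $p\ge 2k-1$ with the congruence $s\equiv-4\pmod k$ to reduce to a short finite verification; the descent in the first part is the other delicate point, though it is brief.
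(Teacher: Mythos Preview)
Your proof is correct. For the $n=pq$ case, both you and the paper factor $x^3+1=(x+1)(x^2-x+1)$ and split into the four divisibility subcases; the paper packages these into three preparatory lemmas (your descent is essentially its Lemma~8), while you dispatch the mixed case $p\mid q+1$, $q\mid p^2-p+1$ directly by reducing $qm=p^2-p+1$ modulo $p$ to force $m=p-1$ and hence $(q-p)(p-1)=1$. The $n=2^\alpha p$ case is where the two arguments genuinely diverge. The paper sets $p=k_1 2^{\alpha-1}-1$, treats $k_1=1$, $k_1=2$, $k_1\ge 3$ separately, and in the last branch introduces a second parameter $k_5$ with further subcases. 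You instead invert $2^\alpha k\equiv 1\pmod p$ to obtain the single cubic congruence $k^3\equiv 8\pmod p$, factor $k^3-8=(k-2)(k^2+2k+4)$, and then combine the size bound $p\ge 2k-1$ with the residue condition $s\equiv -4\pmod k$ to force $k<14$ and reduce to the short list $k\in\{1,3,4,5,6,8,11\}$. Your route is more uniform and conceptual; the price is the small numerical check at the end, whereas the paper's casework, though longer, is entirely symbolic. The converse is handled the same way in both proofs, via $8^\beta-1=(2^\beta-1)(4^\beta+2^\beta+1)$ and a reduction modulo $7$ showing the quotient is integral precisely when $3\nmid\beta$.
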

Moreover, let $\omega(n)$ be the number of prime factors of $n$ (with multiplicity), i.e., $\omega(n)=\sum_{p^\nu||n}{1}$. We have the following
\begin{conjecture}
  $\omega(n)=2$ and $n|\sigma_{3}(n)$ iff $n$ is an even perfect number except for $28$.
\end{conjecture}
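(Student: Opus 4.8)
The plan is to prove the two directions separately, leaning on Theorem~4 throughout. Sufficiency is immediate: an even perfect number $n=2^{p-1}(2^{p}-1)$ (with $p$ and $2^{p}-1$ prime) has exactly the two prime factors $2$ and $2^{p}-1$, so $\omega(n)=2$, and it is of the shape $2^{\alpha}\cdot(\text{odd prime})$, so the converse part of Theorem~4 gives $n\mid\sigma_{3}(n)$ --- and the one exception there is precisely $n=28$. Thus every even perfect number other than $28$ satisfies the stated conditions. (For a self-contained check: $\sigma_{3}(n)=\frac{8^{p}-1}{7}(1+q^{3})=\frac{q(4^{p}+2^{p}+1)}{7}\cdot 2^{p}(q^{2}-q+1)$ with $q=2^{p}-1$, using $8^{p}-1=(2^{p}-1)(4^{p}+2^{p}+1)$ and $1+q^{3}=(1+q)(q^{2}-q+1)$; since $3\nmid p$ a one-line computation mod $7$ gives $7\mid 4^{p}+2^{p}+1$, whence $2^{p-1}q\mid\sigma_{3}(n)$, while for $p=3$ the $7$ cancels and the divisibility fails.)

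For necessity, suppose $\omega(n)=2$ and $n\mid\sigma_{3}(n)$, and write $n=p^{\alpha}q^{\beta}$ with primes $p<q$. Since $\sigma_{3}(p^{\alpha})\equiv 1\pmod p$ and $\sigma_{3}(q^{\beta})\equiv 1\pmod q$, multiplicativity and coprimality make the hypothesis equivalent to the pair $p^{\alpha}\mid\sigma_{3}(q^{\beta})$ and $q^{\beta}\mid\sigma_{3}(p^{\alpha})$. The strategy is to reduce to a situation covered by Theorem~4 in two steps. \emph{Step 1:} show $p=2$, i.e.\ that no odd $n$ with exactly two prime divisors satisfies $n\mid\sigma_{3}(n)$. \emph{Step 2:} show $\beta=1$; here $\beta$ even is impossible because then $\sigma_{3}(q^{\beta})$ is a sum of an odd number of odd terms, hence odd, contradicting $2^{\alpha}\mid\sigma_{3}(q^{\beta})$ with $\alpha\ge 1$, while for $\beta$ odd the lifting--the--exponent lemma gives $v_{2}(\sigma_{3}(q^{\beta}))=v_{2}(q+1)+v_{2}(\beta+1)-1$, so $\alpha\le v_{2}(q+1)+v_{2}(\beta+1)-1$, and $q^{\beta}\mid\sigma_{3}(2^{\alpha})=(2^{3(\alpha+1)}-1)/7$ forces $\alpha+1$ to be divisible by a large power of $q$ once $\beta\ge 3$; together these two bounds leave only finitely many $(q,\beta)$, which one checks by hand. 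Once $p=2$ and $\beta=1$ we have $n=2^{\alpha}q$ with $q$ an odd prime, which is exactly the hypothesis of Theorem~4, so $n$ is an even perfect number; and $n\neq 28$ because $28\nmid\sigma_{3}(28)=73\cdot 344$.

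I expect Step~1 to be the main obstacle. Crude size estimates --- e.g.\ $q^{\beta}\le\sigma_{3}(p^{\alpha})<\tfrac{p^{3}}{p^{3}-1}p^{3\alpha}$ and the symmetric bound $p^{\alpha}<\tfrac{q^{3}}{q^{3}-1}q^{3\beta}$ from the pair above --- only confine $\sigma_{3}(n)/n$ to an interval of length $\gg 1$, so they cannot by themselves exclude odd solutions; a proof seems to need simultaneous control of $\operatorname{ord}_{q}(p)$, $\operatorname{ord}_{p}(q)$ and of the prime factorisations of $\sigma_{3}(p^{\alpha})$ and $\sigma_{3}(q^{\beta})$, perhaps via Zsygmondy's theorem together with lifting the exponent, and it is conceivable that the full statement is no easier than the corresponding question for odd perfect numbers. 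A secondary difficulty is hidden in the ``large power of $q$'' claim in Step~2: if $q$ were a Wieferich-type prime (so that $q^{2}$ or $q^{3}$ divides $2^{\operatorname{ord}_{q}(2)}-1$) that bound on $\alpha+1$ weakens, and clearing the residual cases rigorously would appeal to the (open) finiteness of such primes. These gaps are why the statement is offered only as a conjecture.
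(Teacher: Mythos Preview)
The statement you were asked to prove is labelled \emph{Conjecture} in the paper, and the paper offers no proof of it; Theorem~4 is the partial result that motivates the conjecture, covering only the special shapes $n=pq$ and $n=2^{\alpha}p$. So there is no ``paper's own proof'' to compare against, and you correctly diagnose this in your final paragraph.

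Your sufficiency direction is fine and coincides with what the paper actually establishes: it is exactly the ``converse'' clause of Theorem~4, and your self-contained verification (factoring $\sigma_{3}(2^{p-1}(2^{p}-1))$ and checking $7\mid 4^{p}+2^{p}+1$ for $p\neq 3$) reproduces the paper's computation. For necessity, your two-step reduction---first force $p=2$, then force $\beta=1$, then invoke Theorem~4---is the natural strategy, and the case $\beta$ even in Step~2 is dispatched correctly. But, as you yourself flag, neither step is completed: Step~1 (excluding odd $n=p^{\alpha}q^{\beta}$) is left entirely open, and the ``large power of $q$'' bound in Step~2 for odd $\beta\ge 3$ depends on controlling $v_{q}(8^{\alpha+1}-1)$, which runs into the Wieferich-type obstruction you mention. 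These are genuine gaps, not oversights, and the paper does not resolve them either---hence the conjectural status. Your write-up is therefore best read as an accurate map of what is known (Theorem~4) and what remains, rather than as a proof.
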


\bigskip

\textbf{2. Preliminaries}
\begin{lemma}
  If $d>0$ is an odd integer, then $x^2-dy^2=-4$ has integer solutions iff $u^2-dv^2=-1$ has integer solutions.
\end{lemma}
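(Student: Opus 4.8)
The plan is to prove both implications constructively. The implication ``$\Leftarrow$'' is immediate: if $u^2-dv^2=-1$, then $x=2u$, $y=2v$ satisfy $x^2-dy^2=4(u^2-dv^2)=-4$, and here the oddness of $d$ is not even used.

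For ``$\Rightarrow$'', I would begin with a solution $(x,y)$ of $x^2-dy^2=-4$ and first determine the parities of $x$ and $y$ by reducing modulo $4$. Since $d$ is odd, if $x$ is odd and $y$ even then $x^2-dy^2\equiv 1\pmod 4$, while if $x$ is even and $y$ odd then $x^2-dy^2\equiv -d\pmod 4$ is odd; in either case it cannot equal $-4$. Hence $x\equiv y\pmod 2$. If both are even, write $x=2u$, $y=2v$ and divide by $4$ to get $u^2-dv^2=-1$ at once.

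The only substantive case is when $x$ and $y$ are both odd (which, as it happens, forces $d\equiv 5\pmod 8$, though we shall not need this). The key observation is that $\alpha=\frac{x+y\sqrt d}{2}$ is an algebraic integer, with minimal polynomial $t^2-xt-1$ and norm $\frac{x^2-dy^2}{4}=-1$; thus $\alpha^2=x\alpha+1$ and therefore $\alpha^3=(x^2+1)\alpha+x=\frac{x^3+3x}{2}+\frac{(x^2+1)y}{2}\sqrt d$, and because $x$ is odd both coefficients here are integers. Setting $u=\frac{x^3+3x}{2}$ and $v=\frac{(x^2+1)y}{2}$, multiplicativity of the norm gives $u^2-dv^2=N(\alpha)^3=-1$. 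Equivalently, and avoiding all ring-theoretic language, one can simply define $u$ and $v$ by these formulas and verify $u^2-dv^2=-1$ directly by substituting $dy^2=x^2+4$ and using the polynomial identity $t(t+3)^2-(t+4)(t+1)^2=-4$ with $t=x^2$.

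I expect the only real obstacle to be the both-odd case: one must notice that squaring $\alpha$ yields a unit of norm $+1$ rather than $-1$, so that cubing is the right operation, and then check that the oddness of $x$ alone is precisely what forces $\alpha^3$ back into $\mathbb Z[\sqrt d]$. Everything else is routine modular arithmetic.
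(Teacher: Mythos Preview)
Your proof is correct and uses the very same transformation as the paper: the paper simply writes down $u=\frac{x(x^2+3)}{2}$, $v=\frac{(x^2+1)y}{2}$ and checks $u^2-dv^2=-1$ directly, without splitting off the both-even case or explaining where the formulas come from. Your cubing-of-$\alpha$ argument supplies exactly the motivation the paper omits, and your polynomial identity $t(t+3)^2-(t+4)(t+1)^2=-4$ is precisely the hidden computation behind the paper's one-line verification.
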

\begin{proof}
  The Sufficiency is  obvious. Now we prove the necessity.\\
  Since $d>0$ is an odd integer and $x^2-dy^2=-4$, so $x\equiv y \pmod2$. Make the transformation
  \begin{equation*}
    \begin{cases}
      u=\frac{x(x^2+3)}{2},\\
      v=\frac{(x^2+1)y}{2}.
    \end{cases}
  \end{equation*}
 Then
  \begin{equation*}
    u^2-dv^2=\left(\frac{x(x^2+3)}{2}\right)^2-d\left(\frac{(x^2+1)y}{2}\right)^2=-1.
  \end{equation*}
\end{proof}
It is well known that if $N>0$ is not a perfect square,  the continued fraction representation of $\sqrt{N}$ has the form $\sqrt{N}=[a_0; \overline{a_1,a_2,\cdots,a_l}]$, where we denote $l(\sqrt{N})=l$ the period of the continued fraction of $\sqrt{N}$. We have
\begin{lemma}(see Kaplan and Williams \cite{2})
  If $N>0$ is not a perfect square, then $x^2-Ny^2=-1$ has integer solutions iff $l(\sqrt{N})$ is odd.
\end{lemma}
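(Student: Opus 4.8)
The plan is to run the standard continued‑fraction machinery for $\sqrt N$. I would introduce the complete quotients $\alpha_k=(P_k+\sqrt N)/Q_k$, with $P_0=0$, $Q_0=1$ and the usual integer recursions $a_k=\lfloor\alpha_k\rfloor$, $P_{k+1}=a_kQ_k-P_k$, $Q_{k+1}=(N-P_{k+1}^2)/Q_k$, together with the convergents $p_k/q_k$. Two facts would be the backbone. First, the norm identity $p_{k-1}^2-Nq_{k-1}^2=(-1)^kQ_k$ for all $k\ge1$, which I would derive by substituting $\alpha_k=(P_k+\sqrt N)/Q_k$ into $\sqrt N=(\alpha_kp_{k-1}+p_{k-2})/(\alpha_kq_{k-1}+q_{k-2})$, separating rational and irrational parts, and using $p_{k-1}q_{k-2}-p_{k-2}q_{k-1}=(-1)^k$. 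Second, the period characterization: for $k\ge1$, $Q_k=1$ if and only if $l\mid k$. For this I would use that $\alpha_1$ is reduced, hence purely periodic of period $l$ with $\alpha_l=a_0+\sqrt N$ (so $Q_{ml}=1$ for every $m\ge1$), and conversely that the standard bounds $0<P_k<\sqrt N$ and $0<Q_k<2\sqrt N$ for $k\ge1$ force $\sqrt N-1<P_k<\sqrt N$, hence $P_k=a_0$, whenever $Q_k=1$, so that $\alpha_k=a_0+\sqrt N=\alpha_l$ and $l\mid k$.

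Granted these two facts, sufficiency is immediate: if $l$ is odd, take $k=l$ to get $p_{l-1}^2-Nq_{l-1}^2=(-1)^lQ_l=-1$, so $(x,y)=(p_{l-1},q_{l-1})$ solves $x^2-Ny^2=-1$. For necessity I would argue by contraposition. Assume $l$ is even and that $x^2-Ny^2=-1$ has a positive solution $(x,y)$. The crucial point is that $x/y$ must then be a convergent of $\sqrt N$: one has $\gcd(x,y)=1$ and $|x-y\sqrt N|=1/(x+y\sqrt N)$, and an elementary estimate gives $x+y\sqrt N>2y$ (for $N\ge4$ since $y\sqrt N\ge 2y$, and for $N\in\{2,3\}$ since then $x\ge y$), so $|\sqrt N-x/y|<1/(2y^2)$ and Legendre's approximation criterion forces $x/y=p_{k-1}/q_{k-1}$ for some $k\ge1$, i.e. $x=p_{k-1}$, $y=q_{k-1}$. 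Then the norm identity gives $(-1)^kQ_k=-1$; since $Q_k>0$, this means $Q_k=1$ and $k$ is odd, and the period characterization then yields $l\mid k$, contradicting that $l$ is even. Hence $x^2-Ny^2=-1$ has no solution when $l$ is even, which is the remaining implication.

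The step I expect to be the main obstacle is exactly this last one --- showing that every positive solution of the negative Pell equation comes from a convergent of $\sqrt N$ --- since it is the only place that needs a genuine Diophantine‑approximation input (Legendre's theorem together with the inequality $x+y\sqrt N>2y$) rather than pure bookkeeping with the recursions. Everything else is a careful but routine unwinding of the continued‑fraction identities and of the shape of the period, which one could alternatively simply quote from a standard reference on the Pell equation.
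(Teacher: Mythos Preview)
Your argument is the standard textbook proof and is correct in all essentials: the norm identity $p_{k-1}^2-Nq_{k-1}^2=(-1)^kQ_k$, the characterization $Q_k=1\Leftrightarrow l\mid k$ for $k\ge1$, and the use of Legendre's approximation criterion to show that any positive solution of $x^2-Ny^2=-1$ is a convergent are exactly the right ingredients, and your verification of the inequality $x+y\sqrt N>2y$ in the small cases $N=2,3$ is fine.

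The only comparison to make is that the paper does not prove this lemma at all: it is stated with a bare citation to Kaplan and Williams, so there is no ``paper's own proof'' against which to measure yours. What you have written is essentially the classical argument one finds in standard references on the Pell equation (and presumably in the cited paper as well), so in that sense your route is the expected one rather than a genuinely different approach.
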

\begin{lemma}
  All solutions of the equation $1+x^2+y^2=3xy$~$(1\leq x<y)$ are
  \begin{equation}
   \begin{cases}
      x=F_{2k-1},\\
      y=F_{2k+1},
    \end{cases}
  \end{equation}
  where $k\geq1$ and $F_n$ is a Fibonacci number.
\end{lemma}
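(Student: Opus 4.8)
The plan is to prove Lemma 3 by Vieta jumping (descent on the set of solutions). Fix a solution $(x,y)$ with $1\le x<y$, freeze $x$, and regard $y$ as a root of the quadratic
\[
t^2-3xt+(x^2+1)=0,\qquad y':=3x-y=\frac{x^2+1}{y}.
\]
Since $y'=3x-y\in\ZZ$ and $y'y=x^2+1>0$ with $y>0$, the other root $y'$ is a positive integer, and $(y',x)$ again satisfies $1+x^2+(y')^2=3x\,y'$, i.e.\ it is a solution of the same equation with the two variables interchanged. First I would show that this ``jump down'' strictly decreases the smaller coordinate as soon as $x\ge 2$: from $y\ge x+1$ we get $y'=(x^2+1)/y\le(x^2+1)/(x+1)<x$, the last inequality being equivalent to $1<x$; and $y'=x$ would force $x\mid 1$, hence $x=1$. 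So for $x\ge 2$ we have $1\le y'<x$, and $(y',x)$ is a genuinely smaller solution (smaller first coordinate).

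Next I would run the induction on the first coordinate. If $x=1$, the equation becomes $y^2-3y+2=0$, so $y\in\{1,2\}$, and $x<y$ forces $y=2$; thus $(1,2)=(F_1,F_3)$ is the unique solution with $x=1$. If $x\ge 2$, the jump yields a solution $(y',x)$ with $y'<x$, which by strong induction equals $(F_{2j-1},F_{2j+1})$ for some $j\ge 1$. Then $x=F_{2j+1}$ and $y=3x-y'=3F_{2j+1}-F_{2j-1}$; applying $F_{n+2}=F_{n+1}+F_n$ twice gives $3F_{2j+1}-F_{2j-1}=F_{2j+3}$, so $(x,y)=(F_{2(j+1)-1},F_{2(j+1)+1})$, which closes the induction and shows every solution has the claimed form.

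Conversely I would check that each pair $(F_{2k-1},F_{2k+1})$ with $k\ge 1$ is actually a solution and satisfies $1\le F_{2k-1}<F_{2k+1}$. Writing $F_{2k+1}=F_{2k}+F_{2k-1}$ and expanding, the required identity $1+F_{2k-1}^2+F_{2k+1}^2=3F_{2k-1}F_{2k+1}$ collapses to $F_{2k-1}F_{2k+1}-F_{2k}^2=1$, which is Cassini's identity $F_{n-1}F_{n+1}-F_n^2=(-1)^n$ evaluated at $n=2k$ (proved by a one-line induction).

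I expect the descent step to be the only delicate point: one must verify that $y'$ remains a \emph{positive} integer and lies \emph{strictly} below $x$, and rule out the degenerate possibilities $y'=0$ and $y'=x$ (both of which are eliminated using $x\ge 2$). Everything after that is routine bookkeeping with the Fibonacci recurrence and Cassini's identity.
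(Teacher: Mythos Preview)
Your proof is correct, and it takes a genuinely different route from the paper's. The paper rewrites the equation as $5x^2-4=(3x-2y)^2$ and invokes Gessel's criterion (a positive integer $n$ is Fibonacci if and only if one of $5n^2\pm 4$ is a perfect square) to conclude that $x$ is a Fibonacci number, then uses the Lucas-number identity $5F_{2k}^2+4=L_{2k}^2$ to rule out the even-index case $x=F_{2k}$. You instead run a Vieta-jumping descent: from $(x,y)$ with $x\ge 2$ you pass to the strictly smaller solution $(y',x)$ with $y'=3x-y=(x^2+1)/y$, reach the base $(1,2)=(F_1,F_3)$, and climb back using $3F_{2j+1}-F_{2j-1}=F_{2j+3}$. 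Your argument is more self-contained, needing no external characterization of Fibonacci numbers, and is the natural technique for this Markov-type equation; the paper's argument is shorter once Gessel's result is taken for granted but depends on that citation. Both proofs use Cassini's identity for the verification that the pairs $(F_{2k-1},F_{2k+1})$ actually satisfy the equation.
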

\begin{proof}
First of all, we prove that $x$ and $y$ given by $(3)$ satisfy the equation $1+x^2+y^2=3xy$. By Cassini's identity \cite{5}
\begin{equation*}
  F_n^2-F_{n+1}F_{n-1}=(-1)^{n-1},
\end{equation*}
we have
\begin{equation*}
  F_{2k}^2-F_{2k+1}F_{2k-1}=-1,
\end{equation*}
that is
\begin{equation*}
  (F_{2k+1}-F_{2k-1})^2-F_{2k+1}F_{2k-1}=-1.
\end{equation*}
Hence,
\begin{equation*}
  1+F_{2k-1}^2+F_{2k-1}^2=3F_{2k-1}F_{2k+1}.
\end{equation*}
Now we prove that $(3)$ gives all the solutions. We only need to prove that if $1+x^2+y^2=3xy$~$(1\leq x<y)$ then  $x=F_{2k-1}$. Note that
\begin{equation*}
  1+x^2+y^2=3xy\Longleftrightarrow5x^2-4=(3x-2y)^2,
\end{equation*}
by \cite{7} we deduce that $x$ is a Fibonacci number. If $x=F_{2k-1}$, we have done. If $x=F_{2k}$, then
\begin{equation*}
  5F_{2k}^2-4=(3x-2y)^2.
\end{equation*}
 By \cite{7}, last line of pp. 417, we have
\begin{equation*}
  5F_{2k}^2+4=L_{2k}^2,
\end{equation*}
where $L_0=2, L_1=1, L_{n+1}=L_n+L_{n-1}$ be the  Lucas  series. Hence
\begin{equation*}
  8=\left(5F_{2k}^2+4\right)-\left(5F_{2k}^2-4\right)=L_{2k}^2-(3x-2y)^2,
\end{equation*}
so $|3x-2y|=1$, $L_{2k}=3$ and $x=F_{2k}=\sqrt{\frac{L_{2k}^2-4}{5}}=1=F_2=F_1$.
Hence, we deduce that  $x=F_{2k-1}$.
\end{proof}
\begin{lemma}
  If a positive integer $k\neq3$, then $1+x^2+y^2=kxy$ has no solution.
\end{lemma}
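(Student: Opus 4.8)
\medskip\noindent\emph{Proof plan.}
The plan is to prove this by Vieta jumping (infinite descent on positive-integer solutions). Since $1+x^2+y^2=kxy$ is symmetric in $x$ and $y$, I may assume throughout that $0<x\le y$, and the goal is to show that the mere existence of such a solution forces $k=3$.

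First I would clear the base cases that the descent eventually runs into. If $x=y$, the equation reads $x^2(k-2)=1$, so $x=1$ and $k=3$. If $x=1$ (and $y>1$), it becomes $y^2-ky+2=0$, so $k^2-8$ must be a nonnegative perfect square; writing $k^2-8=t^2$ gives $(k-t)(k+t)=8$ with the two factors of equal parity, hence both even, which again forces $k=3$. So for $k\neq3$ no solution can have $x=y$ or $\min(x,y)=1$.

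For the descent, assume $k\neq3$ and that a solution exists; among all solutions with $x\le y$ choose one minimizing $\min(x,y)=x$. By the base cases $x\ge2$ and $x<y$, so $y\ge x+1$. Regard the equation as the quadratic $t^2-(kx)\,t+(1+x^2)=0$ in $t$; one root is $y$, so the other is $y':=kx-y=\frac{1+x^2}{y}$, which is a positive integer (an integer because $y'=kx-y$, positive because $y'=(1+x^2)/y>0$), and $(x,y')$ is again a solution. Since $1+x^2<x^2+x$ for $x\ge2$, we have $y'=\frac{1+x^2}{y}\le\frac{1+x^2}{x+1}<x$, so $(y',x)$ is a solution with $\min(y',x)=y'<x$, contradicting minimality. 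Hence for every positive integer $k\neq3$ the equation has no solution.

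This argument is essentially forced and I do not expect a real obstacle; the only thing needing (elementary) care is verifying that the jumped value $y'$ is a genuine \emph{positive integer} and that $y'<x$ strictly, so that the chosen minimal solution can in fact be made smaller. I would also note an alternative that uses the Pell lemmas quoted above: multiplying by $4$ and completing the square turns $1+x^2+y^2=kxy$ into $(2y-kx)^2-(k^2-4)x^2=-4$; for odd $k$, Lemma 1 reduces solvability to that of $u^2-(k^2-4)v^2=-1$, and Lemma 2 makes this equivalent to $l(\sqrt{k^2-4})$ being odd, which fails for $k\ge5$ (one checks that $\sqrt{k^2-4}$ then has period $6$); for even $k=2m\ge4$ one divides out a factor $4$ to reach $X^2-(m^2-1)Y^2=-1$ and uses $\sqrt{m^2-1}=[\,m-1;\overline{1,2m-2}\,]$ of period $2$; the cases $k=1,2$ are immediate. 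Since the Vieta-jumping proof is shorter and self-contained, that is the one I would write.
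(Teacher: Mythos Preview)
Your Vieta-jumping argument is correct and complete: the base cases pin down $k=3$ whenever $x=y$ or $x=1$, and the descent step is clean once you observe $y'=(1+x^2)/y\le(1+x^2)/(x+1)<x$ for $x\ge2$. The positivity and integrality of $y'$ are exactly as you note.

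Your route is genuinely different from the paper's. The paper does not use descent at all; instead it first shows $k$ must be odd (by parity/mod $4$ considerations), then completes the square to $(3x-2y)^2$-type form and rewrites the equation as the Pell-like $z^2-(k^2-4)y^2=-4$. For odd $k\ge5$ it invokes Lemma~1 to pass to $u^2-(k^2-4)v^2=-1$, then Lemma~2 (Kaplan--Williams) to reduce solvability to the parity of the continued-fraction period of $\sqrt{k^2-4}$, and finally quotes the explicit expansion $\sqrt{k^2-4}=[k-1;\overline{1,\tfrac{k-3}{2},2,\tfrac{k-3}{2},1,2k-2}]$ of period $6$ to finish. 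What you sketch as your ``alternative'' is essentially the paper's actual proof (you even extend it to even $k$, which the paper handles separately by a direct parity argument). Your primary proof is shorter, entirely self-contained, and makes Lemmas~1 and~2 unnecessary for this particular statement; the paper's approach, on the other hand, ties the result into the Pell/continued-fraction framework it has already set up and uses an explicit structural fact about $\sqrt{k^2-4}$.
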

\begin{proof}
  Since $kxy=1+x^2+y^2>2xy$, we only need to prove Lemma 4 for $k\geq4$. If $1+x^2+y^2=kxy$ has integer solutions, the discriminant of above
quadratic in $x$ must be a square, so there exists $z\in\ZZ$ such that
  \begin{equation*}
    k^2y^2-4(y^2+1)=(k^2-4)y^2-4=z^2,
  \end{equation*}
or
  \begin{equation*}
    z^2-(k^2-4)y^2=-4.
  \end{equation*}
  Now we show that $k$ is odd. Suppose  $k$ is even, then $x$ and $y$ can't be both even.
  \\If only one of $x$ and $y$ is odd, then
  \begin{equation*}
    1+x^2+y^2=kxy~~\pmod 4\Longrightarrow~~2\equiv0 \pmod4.
  \end{equation*}
  If $x$ and $y$ are both odd, then
  \begin{equation*}
    1+x^2+y^2=kxy~~\pmod 2\Longrightarrow~~1\equiv0 \pmod2.
  \end{equation*}
  Hence $k$ is odd.  Obviously, $k^2-4$ is not a perfect square when the odd integer $k\geq4$, so by Lemmas 1 and 2 we obtain
  \begin{align*}
    &z^2-(k^2-4)y^2=-4~has~integer~solutions\\
    \Longleftrightarrow & \; u^2-(k^2-4)v^2=-1~has~integer~solutions\\
    \Longleftrightarrow & \; l(\sqrt{k^2-4})~is~odd.
  \end{align*}
 For the odd integer $k\geq4$, by \cite{6}, pp. 503, Exercise 11, we have $\sqrt{k^2-4}=[k-1; \overline{1,\frac{k-3}{2},2,\frac{k-3}{2},1,2k-2}]$, i.e., $l(\sqrt{k^2-4})=6$. Contradiction.
\end{proof}
\begin{lemma}(See Luca and Ferdinands \cite{3})
  For any given positive integer $a\geq2$ there are infinitely many even numbers $n$ such that $n|\sigma_a(n)$.
\end{lemma}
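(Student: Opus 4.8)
The plan is to produce, for each fixed $a\ge 2$, an infinite strictly increasing sequence of even integers $m_0\mid m_1\mid m_2\mid\cdots$ with $m_k\mid\sigma_a(m_k)$ for every $k$, obtained by repeatedly adjoining one new prime. The engine is an elementary divisibility step, which I would establish first: \emph{if $m\mid\sigma_a(m)$ and $p$ is a prime with $p\nmid m$ but $p\mid\sigma_a(m)$, then $mp\mid\sigma_a(mp)$}. This is immediate from the multiplicativity of $\sigma_a$, since $\sigma_a(mp)=\sigma_a(m)(1+p^a)$: for a prime $q\ne p$ we have $v_q(mp)=v_q(m)\le v_q(\sigma_a(m))\le v_q(\sigma_a(mp))$, and $v_p(mp)=1\le v_p(\sigma_a(m))\le v_p(\sigma_a(mp))$ because $p\mid\sigma_a(m)$ while $\gcd(p,1+p^a)=1$.

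Two things then remain. First, a base case: an even $m_0$ with $m_0\mid\sigma_a(m_0)$. Here I would take $m_0=2p$, where $p$ is a prime with $2^a\equiv-1\pmod p$ --- equivalently, a primitive prime divisor of $2^{2a}-1$, which exists by Zsygmondy's theorem for every $a\ge 2$ except $a=3$ (handled directly: $m_0=6$ works whenever $a$ is odd, and $m_0=10$ works when $a\equiv 2\pmod 4$). For such $p$, $\sigma_a(2p)=(1+2^a)(1+p^a)$ is divisible by $p$ (as $p\mid 1+2^a$) and by $2$ (as $p$ is odd), hence by $2p$. Second --- and this is the substance --- one must show the chain never stalls: at each stage, $\sigma_a(m_k)$ should possess a prime factor outside $\mathrm{rad}(m_k)$, so that the extension step applies with a genuinely new prime and $m_{k+1}$ is strictly larger. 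Writing $m_k=2p_1\cdots p_k$ with the $p_i$ distinct odd primes, we have $\sigma_a(m_k)=(1+2^a)\prod_{i=1}^k(1+p_i^a)$, and the idea is to feed the largest prime $p$ currently present back into $1+p^a$ and extract a primitive prime divisor of $p^{2a}-1$ by Zsygmondy; such a prime always exists, is odd, and is coprime to $p$.

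The hard part will be exactly this non-stalling step: a priori the prime factors of $(1+2^a)\prod_i(1+p_i^a)$ could all already lie in $\{2,p_1,\dots,p_k\}$, and excluding this --- or, alternatively, exhibiting infinitely many independent starting configurations so that one is not forced to rely on a single chain --- requires a genuine arithmetic input about the primes dividing $x^a+1$, not mere bookkeeping. This is precisely the content of Luca and Ferdinands \cite{3}; the sketch above indicates the shape of their construction, and for the complete details we refer to that paper.
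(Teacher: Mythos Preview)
The paper gives no proof of this lemma at all; it is simply stated with the citation to Luca and Ferdinands \cite{3}. Your proposal therefore already goes beyond the paper's own treatment: the divisibility step and the base case you write out are correct, and you explicitly isolate the non-stalling step as the only real content and defer to \cite{3} for it --- which is precisely what the paper does (without the surrounding sketch). Two minor remarks: the name is \emph{Zsigmondy}; and, as you yourself acknowledge, your suggested use of a primitive prime divisor of $p_k^{2a}-1$ only guarantees the new prime is odd and $\ne p_k$, not that it avoids all of $p_1,\dots,p_{k-1}$, so the deferral to \cite{3} is indeed necessary at that point.
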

\begin{lemma}
  If primes $p<q$ satisfy $p|q+1$ and $q|p+1$, then $p=2$ and $q=3$.
\end{lemma}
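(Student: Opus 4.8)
The plan is to squeeze $q$ between two bounds coming from the two divisibility hypotheses and the inequality $p<q$. First I would use $q\mid p+1$: since $p+1$ is a positive integer divisible by $q$, we must have $p+1\geq q$. On the other hand, $p<q$ with $p,q$ integers gives $p+1\leq q$. Combining these yields $p+1=q$, so $p$ and $q$ are consecutive integers.

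Next I would invoke primality: among two consecutive integers one is even, and the only even prime is $2$. Since $p<q$, the even one must be $p$, hence $p=2$ and $q=p+1=3$. Finally I would note that the pair $(p,q)=(2,3)$ indeed satisfies both conditions, $2\mid 3+1$ and $3\mid 2+1$, so the conclusion is consistent (though for the stated implication this verification is not strictly needed).

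I do not anticipate any real obstacle here: the argument is a two-line pigeonhole on the size of $q$, and the hypothesis $p\mid q+1$ is in fact not needed for the forward implication — only $q\mid p+1$ together with $p<q$ is used. If one wanted the statement to be an ``iff'' or to make full use of both hypotheses, the extra condition $p\mid q+1$ is automatically satisfied once $q=p+1$ and $p=2$, so nothing further is required.
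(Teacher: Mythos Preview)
Your argument is correct and in fact more direct than the paper's. The paper combines the two divisibility conditions into the single equation $1+p+q=kpq$ for some positive integer $k$ (using that $p\mid q+1$ and $q\mid p+1$ with $\gcd(p,q)=1$ force $pq\mid 1+p+q$), then disposes of $k\geq 2$ by the crude bound $2pq>p+q+1$ and solves $(p-1)(q-1)=2$ when $k=1$. Your route bypasses this packaging entirely: from $q\mid p+1$ and $p<q$ alone you squeeze out $q=p+1$, and primality finishes it. This is shorter and, as you observed, shows that the hypothesis $p\mid q+1$ is actually redundant for the stated implication. The paper's formulation has the minor advantage of being symmetric in the two conditions and of yielding the ``iff'' with $1+p+q=kpq$ that is used nowhere else; your approach buys simplicity and a sharper observation about which hypothesis is doing the work.
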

\begin{proof}
  It is obvious that primes $p<q$ satisfy $p|q+1$ and $q|p+1$ iff there exists a positive integer $k$ such that $1+p+q=kpq$. If $k=1$, then $1+p+q=pq$, i.e., $(p-1)(q-1)=2$, so $p=2,q=3$. If $k\geq2$, then $kpq\geq2pq>p+q+1$.
\end{proof}
\begin{lemma}
  If $\frac{x^2-x+1}{xy-1}\in\NN$,  where $x,y\in\NN$ and $y\geq2$, then $\frac{x^2-x+1}{xy-1}=1$.
\end{lemma}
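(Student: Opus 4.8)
The plan is to set $k:=\dfrac{x^2-x+1}{xy-1}\in\NN$ (note $k\ge 1$, and the denominator satisfies $xy-1\ge 1$ since $x\ge 1$, $y\ge 2$) and to show $k=1$. Clearing denominators gives the identity
\begin{equation*}
 x^2-x+1+k=kxy .
\end{equation*}
Reducing this modulo $x$ kills every term on the right and leaves $1+k\equiv 0\pmod x$, so $x\mid k+1$; write $k+1=mx$ with $m\in\NN$. Substituting $k=mx-1$ into the identity makes the left side $x^2-x+mx=x(x-1+m)$, and cancelling one factor of $x$ turns the identity into the linear relation
\begin{equation*}
 ky=x+m-1 .
\end{equation*}

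Now I would bring in the hypothesis $y\ge 2$: it gives $ky\ge 2k$, hence $x+m\ge 2k+1$. On the other hand $x$ and $m$ are positive integers with $xm=k+1$, and for positive integers of fixed product one always has $x+m\le 1+xm$ (equivalently $(x-1)(m-1)\ge 0$), so $x+m\le k+2$. Combining the two inequalities yields $2k+1\le k+2$, i.e. $k\le 1$, and therefore $k=1$. For completeness one can check that $k=1$ genuinely occurs: the equation then reduces to $y=x-1+\tfrac2x$, whose only solutions with $x,y\in\NN$ are $(x,y)=(1,2)$ and $(2,2)$.

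Essentially the same computation can be packaged as Vieta jumping: regarding $x^2-(1+ky)x+(1+k)=0$ as a quadratic in $x$, the companion root is $x'=(1+k)/x$, so $x'=m$ is automatically a positive integer dividing $k+1$, and the sum of roots gives $x+m=1+ky$. I do not expect a real obstacle here — the single idea needed is the reduction modulo $x$ that produces $x\mid k+1$; once $k+1=mx$ is in hand, the relation $ky=x+m-1$ together with $y\ge 2$ and the trivial bound $x+m\le xm+1$ finishes the proof with no case analysis.
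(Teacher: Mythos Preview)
Your proof is correct and is genuinely different from the paper's. The paper sets $n=\frac{x^2-x+1}{xy-1}\ge 2$, views $x^2-(1+ny)x+(n+1)=0$ as a quadratic in $x$, forces the discriminant $(1+ny)^2-4(n+1)$ to be a perfect square $z^2$, factors $4(n+1)=ab$ with $a=1+ny+z$, $b=1+ny-z$, and then runs a case analysis on $b\in\{1,2,3,4\}$ to rule out any integer $y\ge 2$. Your route instead reduces $x^2-x+1+k=kxy$ modulo $x$ to get $x\mid k+1$, writes $k+1=mx$, obtains the single linear relation $ky=x+m-1$, and finishes by combining $y\ge 2$ with the elementary bound $x+m\le xm+1=k+2$; this yields $k\le 1$ with no case splitting at all. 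Your argument is shorter and cleaner, and (as you note) is exactly the Vieta-jumping repackaging of the paper's quadratic, with the companion root $x'=(k+1)/x=m$ doing the work that the paper's discriminant factorisation does. The paper's approach is more hands-on and perhaps easier to stumble upon, but yours buys a uniform one-line endgame.
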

\begin{proof}
  When $(x,y)=(1,2)$, we have $\frac{x^2-x+1}{xy-1}=1$. Suppose that $\frac{x^2-x+1}{xy-1}=n\geq2$. Then $x^2-(1+ny)x+n+1=0$, so the discriminant must be a square, i.e., there exists $z\in\ZZ$ such that
  \begin{equation*}
    (1+ny)^2-4(n+1)=z^2,
  \end{equation*}
or
  \begin{equation*}
    \begin{cases}
    1+ny+z=a,\\
    1+ny-z=b,
  \end{cases}
  \end{equation*}
  where $ab=4(n+1)$. Therefore, $2ny=a+b-2$, i.e.,
  \begin{equation*}
    y=\frac{a+b-2}{2n}=\frac{a+b-2}{2\times(\frac{ab}{4}-1)}=\frac{2a+2b-4}{ab-4}.
  \end{equation*}
  We may suppose that $a\geq b>0$. If $b\geq5$ then $y=\frac{2a+2b-4}{ab-4}\leq\frac{2a+2b-4}{5b-4}<1$. Hence, $b=1, 2, 3$, or $4$.\\
  If $b=1$, then $a=4(n+1)\geq12$, so $2<y=\frac{2a-2}{a-4}<3$.\\
  If $b=2$, then $a=2(n+1)\geq6$, so $1<y=\frac{a}{a-2}<2$.\\
  If $b=3$, then $a=\frac{4}{3}(n+1)\geq4$. When $a\geq6$, we have $y=\frac{2a+2}{3a-4}\leq1$; when $4\leq a\leq5$, we have $1<y=\frac{2a+2}{3a-4}<2$.\\
  If $b=4$, then $a=n+1\geq3$. When $a=3$, we have $y=\frac{a+2}{2a-2}=\frac{5}{4}$; when $a\geq4$, we have $y=\frac{a+2}{2a-2}\leq1$.
\end{proof}
\begin{lemma}
  If positive integers $x\geq y$ satisfy $x|y^2-y+1$ and $y|x^2-x+1$, then $x=y=1$.
\end{lemma}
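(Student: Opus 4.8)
The plan is an infinite descent (Vieta jumping) on the pair $(x,y)$. First dispose of the case $y=1$: then $x\mid 1^2-1+1=1$, so $x=y=1$. Henceforth suppose, for contradiction, that a solution with $y\ge 2$ exists (so $x\ge y\ge 2$), and among all such solutions pick one with $x$ minimal.

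Two preliminary observations are needed. First, $\gcd(x,y)=1$: any prime $p$ dividing both $x$ and $y$ would divide $y^2-y+1$ and $y$, hence $1$. Second, $xy\mid x^2+y^2-x-y+1$: indeed $y\mid x^2-x+1$ and $y\mid y^2-y$ give $y\mid x^2+y^2-x-y+1$, symmetrically $x\mid x^2+y^2-x-y+1$, and coprimality concludes. Write $x^2+y^2-x-y+1=cxy$ with $c\ge 1$ an integer (the left side is positive). Now regard this as a quadratic in $x$, namely $x^2-(cy+1)x+(y^2-y+1)=0$, and let $x'=(cy+1)-x=(y^2-y+1)/x$ be its second root. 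Then $x'$ is a positive integer (using $x\mid y^2-y+1$ and hence $x\le y^2-y+1$), and since $x\ge y\ge 2$ we get $x'\le (y^2-y+1)/y=y-1+1/y<y$; moreover $x\ne y$ (else $x\mid 1$), so in fact $x>y>x'\ge 1$.

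Because $x'$ satisfies the same quadratic, $x'^2+y^2-x'-y+1=cx'y$, and then, exactly as in the second observation, $x'\mid y^2-y+1$ and $y\mid x'^2-x'+1$. Thus $(y,x')$ again satisfies the hypotheses of the lemma, with larger element $y<x$. If $x'\ge 2$ this contradicts the minimality of $x$; if $x'=1$ then $y\mid 1^2-1+1=1$ forces $y=1$, contradicting $y\ge 2$. Either way we reach a contradiction, so no solution with $y\ge 2$ exists and $x=y=1$ is the only solution. The one genuinely delicate point is verifying that the jumped pair $(y,x')$ indeed satisfies both divisibilities together with the strict inequality $x'<y$ (and $x'\ge 1$), i.e.\ that the descent step is legitimate and strictly decreasing; once this is checked the argument closes at once, and in particular the parameter $c$ never has to be determined, so no case analysis on $c$ (as one would need via a discriminant/Pell approach) is required.
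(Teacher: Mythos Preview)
Your proof is correct and follows essentially the same descent as the paper: both replace the larger variable $x$ by $(y^{2}-y+1)/x$ (the paper calls this $t_{2}$), show it is strictly smaller than $y$, and check that the pair $(y,(y^{2}-y+1)/x)$ again satisfies both divisibility conditions. The only difference is packaging: you first combine the two hypotheses into the single relation $x^{2}+y^{2}-x-y+1=cxy$ and then read off the new divisibilities from Vieta's formulas for the quadratic in $x$, whereas the paper obtains $y\mid t_{2}^{2}-t_{2}+1$ by a direct algebraic manipulation of $t_{2}^{2}(x^{2}-x+1)$.
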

\begin{proof}
  When $x=y$, we have $x=y=1$. Now suppose $x>y$. Then
  \begin{equation*}
    \begin{cases}
      y^2-y+1=xt_2,\\
      x^2-x+1=yt_1,
    \end{cases}
  \end{equation*}
  where $t_1, t_2 \in\NN$. When $t_2=y$, then $y|1$ and we have $x=y=1$. When $t_2>y$, we have $y^2-y+1=xt_2\geq(y+1)\cdot(y+1)=y^2+2y+1$, which is impossible. Hence $y>t_2$ and
  \begin{align*}
    &t_{2}^{2}yt_1\\
    =&t_{2}^{2}(x^2-x+1)\\
    =&(xt_2)^2-t_2(xt_2)+t_{2}^{2}\\
    =&(y^2-y+1)^2-t_2(y^2-y+1)+t_{2}^{2}\\
    =&(y^2-y)^2+2(y^2-y)-t_2(y^2-y)+t_{2}^{2}-t_2+1,
  \end{align*}
  so $y|t_{2}^{2}-t_2+1$, i.e., there exist $t_3\in\NN$ such that  $t_{2}^{2}-t_2+1=yt_3$. Note that $y^2-y+1=xt_2=t_2x$, we have
  \begin{equation*}
    \begin{cases}
      t_{2}^{2}-t_2+1=yt_3,\\
      y^2-y+1=t_2x,
    \end{cases}
  \end{equation*}
  where $y>t_2$. Repeating this process, we have $x>y>t_2>t_3>\cdots>t_k=1$ with the numbers $t_2,t_3,\cdots,t_n$ satisfying
  \begin{equation*}
    \begin{cases}
      t_{i+1}|t_{i}^{2}-t_{i}+1,\\
      t_{i}|t_{i+1}^2-t_{i+1}+1.
    \end{cases}
  \end{equation*}
  But $t_{k-1}|t_{k}^2-t_{k}+1$ and $t_k=1$, so $t_{k-1}=1$. Hence $x=y=t_2=\cdots=t_k=1$. Contradiction.
\end{proof}
\bigskip

\textbf{3. Proofs of the Theorems}
\begin{proof}[\textbf{Proof of Theorem 1} \\]
Let $n=p_{1}^{\alpha_1}p_{2}^{\alpha_2}\cdots p_{k}^{\alpha_k}$ be the prime factorization of $n$, where $p_1<p_2<\cdots<p_k$, $\alpha_i\geq1$,
and $k\geq1$. When $k=1$, we have $\sigma_2(n)-n^2=1+p_1^2+\cdots+p_1^{2(\alpha_1-1)}=bp_1^{\alpha_1}$, which is impossible. When $k\geq3$,
 we have
\begin{align*}
   bn&=\sigma_2(n)-n^2\\
    &>\frac{n_1^2}{p_1^2}+\frac{n_2^2}{p_2^2}+\cdots+\frac{n_k^2}{p_k^2}\\
    &\geq k\left(\frac{n_1^2}{p_1^2}\frac{n_2^2}{p_2^2}\cdots\frac{n_k^2}{p_k^2}\right)^{\frac{1}{k}} (by~using~the~arithmetic-geometric~mean~ inequality)\\
    &=\left(kp_1^{-\frac{2}{k}}p_2^{-\frac{2}{k}}\cdots p_k^{-\frac{2}{k}}\right)n^2\\
    &=\left(kp_1^{\alpha_1-\frac{2}{k}}p_2^{\alpha_2-\frac{2}{k}}\cdots p_k^{\alpha_k-\frac{2}{k}}\right)n.
\end{align*}
Note that $\alpha_i-\frac{2}{k}\geq \frac{\alpha_i}{3}$ for each $1\leq i\leq k$. We have
\begin{equation}
  b\geq kp_1^{\alpha_1-\frac{2}{k}}p_2^{\alpha_2-\frac{2}{k}}\cdots p_k^{\alpha_k-\frac{2}{k}}\geq k\prod_{i=1}^{k}p_i^{\frac{\alpha_i}{3}}\geq3n^{\frac{1}{3}}.
\end{equation}
Hence $n$ is bounded.\\
When $k=2$, by $(6)$ we have
\begin{equation}
 b\geq 2p_1^{\alpha_1-1}p_2^{\alpha_2-1},
\end{equation}
from which we obtain that $\alpha_i~(1\leq i\leq 2)$ are bounded. If $\alpha_2>1$, then we deduce from $(7)$ that $p_2$ is bounded, so $n$ is bounded. If $\alpha_2=1$ and $\alpha_1>1$, then $p_1$ is bounded by $(7)$. But since $\sigma_2(n)-n^2=bn$, i.e.,
\begin{equation*}
  (1+p_1^2+p_1^4+\cdots+p_1^{2\alpha_1})(1+p_2^2)-p_1^{2\alpha_1}p_2^2=bp_1^{\alpha_1}p_2,
\end{equation*}
we have
\begin{equation}
  (1+p_1^2+p_1^4+\cdots+p_1^{2\alpha_1})p_2^2-bp_1^{\alpha_1}p_2+(1+p_1^2+p_1^4+\cdots+p_1^{2\alpha_1})=0.
\end{equation}
It follows from $(8)$ that $p_2$ is bounded since $p_1$ is bounded. So $n$ is bounded.\\
If $\alpha_1=\alpha_2=1$, then by $\sigma_2(n)-n^2=bn$ we have
\begin{equation}
  1+p_2^2+p_2^2=bp_1p_2.
\end{equation}
When $b\neq3$, by Lemma 4 we obtain that $(9)$ has no integer solutions.\\
Finally, by $(6),(7)$ and $(9)$, it is easy to deduce that $(1)$ has no solution when $b=1,2$.
\end{proof}
\begin{proof}[\textbf{Proof of Theorem 2} \\]
Let $n=p_{1}^{\alpha_1}p_{2}^{\alpha_2}\cdots p_{k}^{\alpha_k}$ be the prime factorization of $n$, where $p_1<p_2<\cdots<p_k$, $\alpha_i\geq1$, $k\geq1$. Now we prove that if $n$ is an $F$-perfect number, then $k=2$ and $\alpha_1=\alpha_2=1$. If $k=1$ we have $\sigma_2(n)-n^2=1+p_1^2+p_1^4+\cdots+p_1^{2(\alpha_1-1)}=3p_1^{\alpha_1}$, which is impossible. If $k\geq3$, then by $(6)$ we have $n=1$, which is impossible. Thus $k=2$. We now show that $\alpha_1=\alpha_2=1$. Suppose $\alpha_1+\alpha_2\geq3$. Noting that $\alpha_1^2-\alpha_1+2\alpha_1\alpha_2\geq\alpha_1(\alpha_1+\alpha_2)$ and $\alpha_2^2-\alpha_2+2\alpha_1\alpha_2\geq\alpha_2(\alpha_1+\alpha_2)$, and  by using the arithmetic-geometric mean inequality, we have
\begin{align*}
  3n&=\sigma_2(n)-n^2\\
  &>(1+p_1^2+p_1^4+\cdots+p_1^{2(\alpha_1-1)})p_2^{2\alpha_2}+(1+p_2^2+p_2^4+\cdots+p_2^{2(\alpha_2-1)})p_1^{2\alpha_1}\\
  &>(\alpha_1+\alpha_2)\left(p_2^{2\alpha_2}\cdot p_1^2p_2^{2\alpha_2}\cdots p_1^{2(\alpha_2-1)}p_2^{2\alpha}\cdot p_1^{2\alpha_1}\cdot p_2^{2}p_1^{2\alpha_1}\cdots p_2^{2(\alpha_2-1)}p_1^{2\alpha_1}\right)^{\frac{1}{\alpha_1+\alpha_2}}\\
  &\geq 3 p_1^{\frac{\alpha_1^2-\alpha_1+2\alpha_1\alpha_2}{\alpha_1+\alpha_2}}p_2^{\frac{\alpha_2^2-\alpha_2+2\alpha_1\alpha_2}{\alpha_1+\alpha_2}}\\
  &\geq 3p_1^{\alpha_1}p_2^{\alpha_2}\\
  &=3n,
\end{align*}
which is impossible. Therefore, $k=2, \alpha_1=\alpha_2=1$ and $\sigma_2(n)-n^2=3n$, i.e.,
\begin{equation*}
  1+p_1^2+p_2^2=3p_1p_1.
\end{equation*}
Theorem 2 follows from Lemma 3.
\end{proof}
\begin{proof}[\textbf{Proof of Theorem 3} \\]
Let $n=p_{1}^{\alpha_1}p_{2}^{\alpha_2}\cdots p_{k}^{\alpha_k}$ be the prime factorization of $n$, where $p_1<p_2<\cdots<p_k$, $\alpha_i\geq1$, $k\geq1$. It is obvious that $k\neq1$.\\
When $k\geq2$, similar to  the proof of Theorem  1, we have
\begin{equation*}
  b\geq kp_1^{(a-1)\alpha_1-\frac{a}{k}}p_2^{(a-1)\alpha_2-\frac{a}{k}}\cdots p_k^{(a-1)\alpha_k-\frac{a}{k}}\geq k\prod_{i=1}^kp_i^{\frac{\alpha_i}{2}}\geq2n^{\frac{1}{2}}
\end{equation*}
for $a\geq3$. So $n$ is bounded.
\end{proof}

Corollary 1  follows from Theorems 1-3 and Lemma 5; noting that $(1)$ has only one even integer solution for $a=2,b=3$.

\begin{proof}[\textbf{Proof of Theorem 4} \\]
When $n=pq~(p<q)$, then $\sigma_3(n)=1+p^3+q^3+p^3q^3$. But since $n|\sigma_3(n)$, so $pq|1+p^3+q^3$, i.e., $p|q^3+1$ and $q|p^3+1$.
It follows from $x^3+1=(x+1)(x^2-x+1)$  that $p|q^3+1$ and $q|p^3+1$ are equivalent to
\begin{equation*}
  \begin{cases}
    p|q+1\\
    q|p+1
  \end{cases}
  or~~~~~
  \begin{cases}
    p|q+1\\
    q|p^2-p+1\\
  \end{cases}
  or~~~~~
  \begin{cases}
    q|p+1\\
    p|q^2-q+1\\
  \end{cases}
  or~~~~~
  \begin{cases}
    p|q^2-q+1\\
    q|p^2-p+1.\\
  \end{cases}
\end{equation*}
By Lemma 6, the system
\begin{equation*}
  \begin{cases}
    p|q+1,\\
    q|p+1 \\
  \end{cases}
\end{equation*}
has the only solution $(p,q)=(2,3)$.
By Lemma 8,
\begin{equation*}
  \begin{cases}
    p|q^2-q+1\\
    q|p^2-p+1\\
  \end{cases}
\end{equation*}
has no solution.\\
 If
 $\begin{cases}
    p|q+1\\
    q|p^2-p+1\\
  \end{cases}$,
  then $pq|(q+1)(p^2-p+1)$, i.e., $pq|p^2-p+q+1$, so there exists $k\in\NN$ such that $p^2-p+q+1=kpq$, i.e.,
  \begin{equation}
    q=\frac{p^2-p+1}{kp-1}\geq2.
  \end{equation}
  If $k=1$, then we have $q=\frac{p^2-p+1}{p-1}=p+\frac{1}{p-1}$, so $(p,q)=(2,3)$.
  If $k\geq2$, then by Lemma 7, $(10)$ has no solution.\\
  Similarly, we can obtain that $\begin{cases}
    q|p+1\\
    p|q^2-q+1\\
  \end{cases}$has no solution. This proves the first part of Theorem 4. \\
Now we prove the second part. First of all, we prove that if $n$ is an even perfect number except for $28$, then $n|\sigma_3(n)$. When $n$ is an even perfect number, we have $n=2^{p-1}(2^p-1)$, where $p$ and $2^p-1$ are primes. It is obvious that $6|\sigma_3(6)$ and $28\nmid\sigma_3(28)$. We suppose that $n=2^{p-1}(2^p-1)$, where prime $p\geq5$ and $2^p-1$ is prime. Noting that $2^{3p}-1=(2^p-1)(2^{2p}+2^p+1)$ and $2^{3p}-1=7(1+2^3+2^6+\cdots+2^{3(p-1)})$, so $7|2^{2p}+2^p+1$ and
\begin{align*}
  \sigma_3(n)&=\sigma_3(2^{p-1}(2^p-1))\\
  &=(1+2^3+2^6+\cdots+2^{3(p-1)})(1+(2^p-1)^3)\\
  &=(2^p-1)\frac{2^{2p}+2^p+1}{7}2^p\left((2^p-1)^2-(2^p-1)+1\right)\\
  &=n\frac{2^{2p}+2^p+1}{7}\left((2^p-1)^2-(2^p-1)+1\right),
\end{align*}
from which we deduce that if $n$ is an even perfect number except for $28$, then $n|\sigma_3(n)$.\\
Secondly, we prove that if $n=2^{\alpha-1} p$, where $p$ is an odd prime, $\alpha\geq2$ and $n|\sigma_3(n)$, then $\alpha$ is prime and $p=2^{\alpha}-1$. Noting that
\begin{align*}
  \sigma_3(n)&=\sigma_3(2^{\alpha-1}p)\\
  &=\sigma_3(2^{\alpha-1})\sigma_3(p)\\
  &=(1+2^3+2^6+\cdots+2^{3(\alpha-1)})(1+p^3)\\
  &=(1+2^3+2^6+\cdots+2^{3(\alpha-1)})(1+p)(1-p+p^2)\\
  &\equiv0 \pmod{2^{\alpha-1} p},
\end{align*}
we have $1+p\equiv0 \pmod{2^{\alpha-1}}$ and $1+2^3+2^6+\cdots+2^{3(\alpha-1)}\equiv0 \pmod{p}$, i.e., there exist $k_1, k_2\in\NN$ such that $p=k_12^{\alpha-1}-1$  and $1+2^3+2^6+\cdots+2^{3(\alpha-1)}=\frac{2^{3\alpha}-1}{7}=k_2p$. So
\begin{equation}
  2^{3\alpha}-1=(2^\alpha-1)(2^{2\alpha}+2^\alpha+1)=k_3(k_12^{\alpha-1}-1),
\end{equation}
where $k_3=7k_2$.\\
If $k_1=1$, then we have $p=2^{\alpha-1}-1$, and  it follows from $(11)$ that  $2^\alpha-1 \equiv0 \pmod{2^{\alpha-1}-1}$ or $2^{2\alpha}+2^\alpha+1\equiv0 \pmod{2^{\alpha-1}-1}$. When $2^\alpha-1 \equiv0 \pmod{2^{\alpha-1}-1}$, we have
\begin{equation*}
  1=2^\alpha-1-2(2^{\alpha-1}-1)\equiv0 \pmod{2^{\alpha-1}-1},
\end{equation*}
which is impossible.
When $2^{2\alpha}+2^\alpha+1\equiv0 \pmod{2^{\alpha-1}-1}$, we have
\begin{equation*}
  0\equiv 2^{2\alpha}+2^{\alpha}+1=(2^{\alpha-1}-1)(2^{\alpha+1}+6)+7\equiv7 \pmod{2^{\alpha-1}-1},
\end{equation*}
so $\alpha=4$. Then $n=2^{\alpha-1}p=2^{3}(2^{3}-1)=56$ and $n|\sigma_3(n)$, which is impossible.\\
If $k_1\geq3$, then by $(11)$ we have
\begin{equation*}
  2^{2\alpha}+2^\alpha+1\equiv0 \pmod{k_12^{\alpha-1}-1}.
\end{equation*}
There exists $k_4\in\NN$ such that
\begin{equation}
  2^{2\alpha}+2^\alpha+1=k_4(k_12^{\alpha-1}-1).
\end{equation}
So
\begin{equation*}
  1\equiv-k_4 \pmod{2^{\alpha-1}},
\end{equation*}
there exists $k_5\in\NN$ such that
\begin{equation}
  k_4=k_52^{\alpha-1}-1.
\end{equation}
Combining $(12)$ with $(13)$,
\begin{equation*}
  2^{2\alpha}+2^\alpha+1=(k_52^{\alpha-1}-1)(k_12^{\alpha-1}-1),
\end{equation*}
hence
\begin{equation*}
  2^{\alpha-1}=\frac{2+k_1+k_5}{k_1k_5-4}\geq2,
\end{equation*}
or
\begin{equation}
  (k_1-1)(k_5-1)+k_1k_5\leq11,
\end{equation}
from which we deduce that $k_5=1$ or $2$ since $k_1\geq3$. \\
When $k_5=1$, by $(14)$, $k_1\leq11$, so $3\leq k_1\leq11$. Noting that $2^{\alpha-1}=\frac{2+k_1+k_5}{k_1k_5-4}=\frac{k_1+3}{k_1-4}$, so $k_1=5$, $\alpha=4$, and the prime number $p=k_12^{\alpha-1}-1=39$, which is impossible.\\
When $k_5=2$, by $(14)$, $k_1\leq4$, so $k_1=3$ or $4$. Noting that $2^{\alpha-1}=\frac{2+k_1+k_5}{k_1k_5-4}=\frac{k_1+4}{2k_1-4}$, so $k_1=4$ and $\alpha=2$, $n=2^{\alpha-1}p=2^{\alpha-1}(k_12^{\alpha-1}-1)=14$, and $n|\sigma_3(n)$, which is impossible.\\
Hence $k_1=2$, from which prime $p=k_12^{\alpha-1}-1=2^\alpha-1$ and  $\alpha$ is prime.
\end{proof}


\bigskip
\noindent{Department of Mathematics, Zhejiang University, Hangzhou, 310027, China}\\
\textbf{Email address: txcai$@$zju.edu.cn}
\bigskip

\noindent{Department of Mathematics, Zhejiang University, Hangzhou, 310027, China}\\
\textbf{Email address: chendeyi1986$@$126.com}
\bigskip

\noindent{Department of Mathematics, Zhejiang University, Hangzhou, 310027, China}\\
\textbf{Email address: zhangyongzju@163.com}
\end{document}